\documentclass[reqno,draft]{amsart}
\usepackage{amsmath, amsthm, amssymb, dsfont}
\usepackage[a4paper]{geometry}					
\usepackage{graphicx}
\usepackage{tikz}
\usepackage{capt-of}
\usepackage{enumerate}

\usepackage{latexsym}
\usepackage{bbm}
\usepackage{mathtools}
\usepackage[a]{esvect}

%
\usepackage{comma}
\usepackage{cutwin}
\usepackage{pstricks}
\usepackage{pst-plot}
\usepackage{pgfplots}
\usepackage{float}

\numberwithin{equation}{section}
\newcounter{assumptions}

\usepackage{changes}

\theoremstyle{plain}
\newtheorem{theorem}{Theorem}[section]

\theoremstyle{remark}
\newtheorem{remark}[theorem]{Remark}
\theoremstyle{definition}

\newtheorem{example}[theorem]{Example}

\newcommand{\N}{\mathbb{N}}

\newcommand{\R}{\mathbb{R}}

\newcommand{\C}{\mathbb{C}}


\DeclareMathOperator{\Real}{\mathrm{Re}}

\newcommand{\A}{\mathcal{A}}

\newcommand{\F}{\mathcal{F}}

\newcommand{\cC}{\mathcal{C}}

\newcommand{\cL}{\mathcal{L}}

\newcommand{\Prob}{\mathbf{P}}

\newcommand{\E}{\mathbf{E}}

\newcommand{\eqdist}{%
  \mathrel{\vbox{\offinterlineskip\ialign{%
    \hfil##\hfil\cr
    $\scriptscriptstyle\mathrm{d}$\cr
    \noalign{\kern.1ex}
    $=$\cr
}}}}

\newcommand{\1}{\mathbbm{1}}

\newcommand{\interior}[1]{%
	{\kern0pt#1}^{\mathrm{o}}%
}


\newcommand{\distto}{%
  \mathrel{\vbox{\offinterlineskip\ialign{%
    \hfil##\hfil\cr
    $\scriptscriptstyle\mathrm{d}$\cr
    \noalign{\kern-.05ex}
    $\to$\cr
}}}}
\newcommand{\Probto}{%
  \mathrel{\vbox{\offinterlineskip\ialign{%
    \hfil##\hfil\cr
    $\scriptscriptstyle\Prob$\cr
    \noalign{\kern-.05ex}
    $\to$\cr
}}}}
\newcommand{\TVto}{%
  \mathrel{\vbox{\offinterlineskip\ialign{%
    \hfil##\hfil\cr
    $\scriptscriptstyle\mathrm{TV}$\cr
    \noalign{\kern-.05ex}
    $\to$\cr
}}}}



\newcommand{\ds}{\mathrm{d} \mathit{s}}

\newcommand{\dx}{\mathrm{d} \mathit{x}}


\newcommand{\transp}{\mathsf{T}}

\newcommand{\bv}{\mathbf{v}}
\newcommand{\e}{\mathsf{e}}
\newcommand{\f}{\mathsf{f}}

\newcommand{\bxi}{\boldsymbol{\xi}}

\newcommand{\vA}{\vec{\mathbf{A}}}
\newcommand{\adj}{\mathrm{adj}}

\newcommand{\bmu}{\boldsymbol{\mu}}

\newcommand{\Ip}{\mathbf{I}_{p}}

\newcommand{\Ik}{\mathbf{I}_{\klambda}}

\newcommand{\defeq}{\vcentcolon=}


\newcommand{\Gen}{\mathcal{G}}
\newcommand{\familytree}{\mathcal{T}}
\newcommand{\I}{\mathcal{I}}

\newcommand{\Surv}{\mathcal{S}}
\newcommand{\p}{[ \, p \,]}
\newcommand{\Dom}{\mathcal{D}}
\newcommand{\cZ}{\mathcal{Z}}

\newcommand{\phil}{\phi_{\lambda}}
\newcommand{\philu}{\phi_{\lambda,u}}

\newcommand{\klambda}{\mathtt{k}_\lambda}

\newcommand{\HS}{\mathsf{HS}}

\usepackage{amsmath} 
\usepackage{array}   

\begin{document}

\title[Martingales in multi-type general branching processes]{Convergence of complex martingales in supercritical multi-type general branching processes in $L^q$ for $1 < q \leq 2$}
\author{Konrad Kolesko \and Matthias Meiners \and Ivana Tomic}

\begin{abstract}
Nerman's martingale plays a central role in the law of large numbers for both, single- and multi-type,
supercritical general branching processes.
There are further, complex-valued Ner\-man-type martingales in the single-type process
that figure in the finer fluctuations of these processes.
We construct the analogous martingales for the process with finitely many types
and give sufficient conditions for these martingales to converge in $L^q$ for $q \in (1,2]$.
\smallskip

\noindent
{\bf Keywords:} Crump-Mode-Jagers processes, martingales, general branching processes, multi-type branching, random characteristic
\\{\bf Subclass:} MSC: 60J80
\end{abstract}

\maketitle

\section{Introduction}

Consider a Crump-Mode-Jagers branching process in which each individual is assigned one of finitely many types,
labeled $1, \ldots, p$. Denote by $\cZ_t^j$ the number of type-$j$ individuals born up to and including time $t \geq 0$.
The dynamics of the process are governed by a matrix of offspring point processes $\bxi = (\xi^{i,j})_{i,j=1,\ldots,p}$
where $\xi^{i,j}$ is the point process of birth times of type-$j$ offspring produced by an ancestor of type $i$.
Suppose that the process is supercritical and irreducible
and that there exists a Malthusian parameter $\alpha > 0$ such that the dominant (Perron-Frobenius)
eigenvalue of the matrix $\cL\bmu(\alpha)$ of Laplace transforms of the intensity measures $\mu^{i,j} \coloneqq \E[\xi^{i,j}]$
evaluated at $\alpha$ equals $1$.
Under this condition, one can construct a nonnegative martingale \((W_t(\alpha))_{t \geq 0}\),
known as Nerman’s martingale, where
\begin{equation*}
W_t(\alpha) = \sum_{u \in \cC_t} \frac{v_{\tau(u)}}{v_i}e^{-\alpha S(u)},
\end{equation*}
with $\cC_t$ denoting the set of individuals who are the first in their line of descent to be born strictly after time $t$,
$\bv = (v_1,\ldots,v_p)^\transp$ is the unique normalized  nonnegative right Perron-Frobenius eigenvalue of $\cL\bmu(\alpha)$
and $S(u)$ and $\tau(u)$ representing the birth time and type, respectively, of individual $u$. The following law of large numbers holds:
\begin{equation*}
e^{-\alpha t} \cZ_t^j \to c_{i,j} W(\alpha)
\end{equation*}
as $t \to \infty$ where $c_{i,j} > 0$ is a constant depending on the types $i,j$
and $W(\alpha) = \lim_{t \to \infty} W_t(\alpha)$ is the limit of Nerman's martingale.
Depending on the specific additional assumptions imposed, this convergence may hold
in $\Prob^i$-probability, in $L^1(\Prob^i)$, or $\Prob^i$-almost surely,
where $\Prob^i$ denotes the law of the process with a type-$i$ ancestor.
These results also extend to more general processes---specifically, general branching processes counted with a random characteristic.
For further details, see \cite{Iksanov+Meiners:2015,Jagers:1989,Jagers+Nerman:1984,Nerman:1979,Nerman:1981,Olofsson:2009}.

A refined result, also valid for general branching processes counted with a random characteristic,
has been obtained recently for the single-type case \cite{Iksanov+al:2024}, in which we write $\xi$ for $\xi^{1,1}$.
In this paper, the finiteness of the Laplace transform $\cL\mu$ of $\E[\xi]$ at $\frac\alpha2$ is assumed.
In this case, the roots $\lambda \in \C$ of the equation $\cL\mu(\lambda)=1$ come into play.
For any such $\lambda$ with $\frac\alpha2 < \lambda \leq \alpha$, there is a Nerman-type martingale $(W_t(\lambda))_{t \geq 0}$
where 
\begin{equation*}
W_t(\lambda) = \sum_{u \in \cC_t} e^{-\lambda S(u)},		\quad	t \geq 0.
\end{equation*}
Under suitable $L^2$-assumptions, these martingales converge in $L^2$.
We write $W(\lambda)$ for the limit of $(W_t(\lambda))_{t \geq 0}$ in $L^2$.
Assuming, for simplicity of presentation, that there is no root $\lambda$ satisfying $\Real(\lambda)=\frac\alpha2$
and that all roots $\lambda$ with $\frac\alpha2 < \Real(\lambda) \leq \alpha$ are simple,
one has
\begin{equation*}
e^{-\frac\alpha2 t} \bigg(\cZ_t^\varphi - c_\alpha e^{\alpha t} W(\alpha)
- \sum_{\substack{\frac\alpha2 < \Real \lambda < \alpha:\\ \cL\mu(\lambda)=1}} c_\lambda e^{\lambda t} W(\lambda)\bigg)
\distto c \sqrt{W(\alpha)} N
\end{equation*}
where $\cZ_t^\varphi$ denotes a general branching process counted with random characteristic $\varphi$,
$c,c_\alpha, c_\lambda$ are constants (depending on $\varphi$, $\lambda$ and $\mu$),
and $N$ is a standard normal random variable independent of the branching process, see \cite[Theorem 2.9]{Iksanov+al:2024}.

A multi-type extension of this result would be highly desirable, as it is expected to have many applications.
First, we need the multi-type analogues of the martingales $(W_t(\lambda))_{t \geq 0}$.
The multi-type counterpart of the characteristic equation $\cL\mu(\lambda) = 1$ is given by
\begin{equation} \label{eq:det(Lbmu-Ip)=0}
\det(\cL\bmu(\lambda) - \Ip) = 0,
\end{equation}
where $\Ip$ denotes the $p \times p$ identity matrix.  
To establish a corresponding result in the multi-type setting,  
it is necessary to identify all martingales associated with each solution $\lambda$ to Equation~\eqref{eq:det(Lbmu-Ip)=0},  
and to prove their convergence in $L^2$.
This is the main objective of the present paper.

\section{Assumptions and main result}

We continue by rigorously formulating the model, followed by a precise statement of our main results.

\subsection{Model description}	\label{subsec:General branching process}

In this section, we formally introduce the multi-type general (Crump-Mode-Jagers) branching process.
The process begins with a single individual---the \emph{ancestor}---born at time~$0$.
It is assigned the label~$\varnothing$, the empty tuple, and, like every individual in the process,
is of one of finitely many types:
\begin{align*}
\tau(\varnothing) \in \p \coloneqq \{1, \ldots, p\},
\end{align*}
where $p \in \N = \{1,2,3,\ldots\}$ denotes the number of distinct types.
If $\tau(\varnothing) = i$, meaning the ancestor is of type~$i$, then its children of type~$j$ are born at the points of the random point process $\xi^{i,j}$, for $j = 1, \ldots, p$.
Throughout, we assume that each $\xi^{i,j}$ is supported on $[0, \infty)$, and hence its total mass is given by
$N^{i,j} = \xi^{i,j}([0, \infty))$.
The random variables $N^{i,j}$ take values in $\N_0 \cup \{\infty\}$, where $\N_0 \coloneqq \N \cup \{0\}$.  
In particular, we allow for the possibility that $\Prob(N^{i,j} = \infty) > 0$,  
which means that the ancestor may give rise to an infinite number of children of a given type.
We denote by
\begin{equation*}
\bxi = (\xi^{i,j})_{i,j = 1,\ldots,p}
\end{equation*}
the $p \times p$ matrix of point processes that determines the reproduction behavior of the ancestor.  
The individual processes $\xi^{i,j}$ are referred to as \emph{offspring processes},
and the matrix $\bxi$ as the \emph{offspring process matrix}.

Throughout this text, we denote by $\e_1, \ldots, \e_p$ the canonical basis vectors of $\R^{1 \times p}$,
considered as row vectors.  
Using this notation, we express the vector of offspring point processes for an individual of type~$i$ as
\begin{align}
\xi^{i} = \sum_{j=1}^p \xi^{i,j} \, \e_j = (\xi^{i,1}, \ldots, \xi^{i,p}),
\end{align}
which collects, component-wise, the reproduction processes for each offspring type.
Similarly, we define the offspring count vector for an individual of type~$i$ as
\begin{equation*}
N^i \coloneqq \sum_{j=1}^p N^{i,j} \, \e_j.
\end{equation*}
We further define
\begin{align*}
\xi(\cdot) \coloneqq \sum_{j=1}^p \xi^{\tau(\varnothing),j}(\cdot)
= \sum_{k=1}^{N} \delta_{X_k}(\cdot),
\end{align*}
where $N = \sum_{j=1}^p N^{\tau(\varnothing),j} = \xi([0,\infty))$
is the total number of offspring of the ancestor
and $\delta_x$ is the Dirac measure with a point at $x$.
The point process $\xi$ thus records the birth times of all children of the ancestor, irrespective of their types.
Without loss of generality, we assume
\begin{equation*}
0 \leq X_1 \leq X_2 \leq \ldots,
\end{equation*}
in which case each birth time $X_k$ can be recovered as
\begin{equation*}
X_k = \inf\{t \geq 0 : \xi([0, t]) \geq k\}, \quad k \in \N
\end{equation*}
where $X_k = \infty$ indicates that the ancestor has strictly fewer than $k$ children.

The multi-type general branching process is then constructed by allowing each child of the ancestor to produce offspring
according to an independent copy of the point process matrix~$\bxi$.
These offspring, in turn, reproduce independently according to further independent copies of~$\bxi$, and so on.
To formalize this, we introduce the standard Ulam-Harris notation.  
Let $\N^0 \coloneqq \{\varnothing\}$ denote the set containing only the empty tuple.  
The infinite Ulam-Harris tree is defined as
\begin{equation*}
\I \coloneqq \bigcup_{n \in \N_0} \N^n,
\end{equation*}
i.e., the set of all finite sequences (tuples) of natural numbers.
We abbreviate elements $u = (u_1,\ldots,u_n) \in \I$ by writing $u_1\ldots u_n$.  
Given another element $v = v_1 \ldots v_m \in \I$, their concatenation is denoted $uv = u_1 \ldots u_n v_1 \ldots v_m$.
The Ulam-Harris tree $\I$ serves as the set of labels for all potential individuals.  
Each $u \in \I$ identifies a unique individual with ancestral line
\begin{align*}
\varnothing \rightarrow u_1 \rightarrow u_1u_2 \rightarrow \cdots \rightarrow u_1\ldots u_n = u,
\end{align*}
where $u_1$ is the $u_1$-th child of the ancestor, $u_1u_2$ the $u_2$-th child of $u_1$, and so on.
We write $|u| = n$ to indicate that $u$ belongs to generation $n$.  
For $u \in \I$ and $i \in \N$, the label $ui$ corresponds to the $i$-th child of $u$, and we refer to $u$ as the parent of $ui$.
More generally, we write $u \preceq v$ if $v = uw$ for some $w \in \I$,
and say that $u$ is an ancestor of $v$, and $v$ is a descendant of $u$.  
If, in addition, $u \neq v$, we write $u \prec v$.
For $u = u_1 \ldots u_n \in \I$ and $k \in \N_0$, we denote by $u|_k$ the ancestor of $u$ in generation $k$
if $k \leq |u|$ and $u|_k = u$, otherwise.

Let $\bxi_u=(\xi_u^{i,j})_{i,j = 1,\ldots,p}$, $u \in \I \setminus \{\varnothing\}$
be a family of copies of $\bxi \eqqcolon \bxi_\varnothing$.
We write
\begin{align*}
\xi_u^{i} = \sum_{j=1}^p \xi_u^{i,j} \e_j.
\end{align*}
Let $(\Omega,\A,\Prob)$ be a probability space on which all $\bxi_u$, $u \in \I$ are defined and i.\,i.\,d.\ and
independent of $\tau(\varnothing)$, the random variable that gives the ancestor's type
and write $\Prob^i$ if the ancestor's type is $i \in \p$.
We denote the associated expected value operators by $\E$ and $\E^i$, respectively.

Generation~$0$ of the process is given by $\Gen_0 \coloneqq \{\varnothing\}$.  
The first generation is represented by the random set
\begin{equation*}
\Gen_1 \coloneqq \{k \in \N : X_k < \infty\} \subseteq \N,
\end{equation*}
where each $k \in \Gen_1$ is assigned a type $\tau(k) \in \p$.
Now suppose that generation $\Gen_n$ has been constructed and let $u \in \Gen_n$ be an individual of type $\tau(u) \in \p$.  
Define the point process
\begin{equation*}
\xi_u \coloneqq  \sum_{j=1}^p \xi_u^{\tau(u),j} = \sum_{k=1}^{N_u} \delta_{X_{u,k}},
\end{equation*}
where $N_u = \xi_u([0,\infty))$ is the total number of offspring of $u$ and
\begin{equation*}
X_{u,k} = \inf\{t \geq 0 : \xi_u([0,t]) \geq k\}, \quad k \in \N,
\end{equation*}
where the infimum of the empty set is set to be $\infty$.
Then the $(n+1)$-th generation is given by
\begin{equation*}
\Gen_{n+1} \coloneqq \{uk \in \I : u \in \Gen_n \text{ and } X_{u,k} < \infty\}.
\end{equation*}
The full population is described by the family tree
\begin{equation*}
\familytree \coloneqq \bigcup_{n \in \N_0} \Gen_n.
\end{equation*}
Each $u \in \familytree$ has a type $\tau(u) \in \p$ and a well-defined birth time $S(u) \in [0,\infty)$, given by
\begin{equation*}
S(u) = \sum_{k=1}^n X_{u|_{k-1}, u_k} \quad \text{for } u = u_1 \ldots u_n.
\end{equation*}
For $t \geq 0$ and $j \in \p$, define the counting process
\begin{equation*}
\cZ_t^j \coloneqq \sum_{u \in \familytree} \1_{\{j\}}(\tau(u)) \1_{[0,\infty)}(t - S(u)),
\end{equation*}
which counts the number of individuals of type~$j$ born up to and including time~$t$.
The corresponding vector process is
\begin{equation*}
\cZ_t \coloneqq (\cZ_t^1, \ldots, \cZ_t^p),
\end{equation*}
and $(\cZ_t)_{t \geq 0}$ is the multi-type general branching process.

\subsection{Assumptions}	\label{subsec:assumptions}

We define $\mu^{i,j} \defeq \E[\xi^{i,j}]$ as the intensity measure of $\xi^{i,j}$ for $i,j \in \p$,  
and let $\bmu \coloneqq (\mu^{i,j})_{i,j=1,\ldots,p} = \E[\bxi]$ denote the corresponding matrix.
We write $\cL\mu^{i,j}$ for the Laplace transform of $\mu^{i,j}$, i.e.,
\begin{align*}
\cL\mu^{i,j}:[0,\infty) \to [0,\infty],	\quad	\theta \mapsto \int e^{-\theta x} \, \mu^{i,j}(\dx),	\quad	i,j=1,\ldots,p
\end{align*}
and $\cL\bmu \defeq (\cL\mu^{i,j})_{i,j \in \p}$ for the corresponding matrix.
If $\cL\mu^{i,j}(\theta)<\infty$ for some $\theta \geq 0$, then, by monotonicity, $\cL\mu$ is finite on $[\theta,\infty)$.
We canonically extend $\cL\mu^{i,j}$ to $\Dom(\cL\mu^{i,j}) = \{z \in \C: \cL\mu^{i,j}(\Real(z))<\infty\}$.
The domain of the matrix-valued Laplace transform $\cL\bmu$ is then defined as the intersection of the domains of its components,
that is,
\begin{align*}
\Dom(\cL\bmu) = \bigcap_{i,j \in \p} \!\!\! \Dom(\cL\mu^{i,j})
= \bigg\{z \in \C: \int e^{-\Real(z) x} \, \mu^{i,j}(\dx) < \infty \text{ for all } i,j \in \p\bigg\}.
\end{align*}
Notice that $\cL\bmu(\theta)$ is a nonnegative matrix for every $\theta \in \Dom(\cL\bmu) \cap \R$
and hence possesses a maximal nonnegative eigenvalue
that equals the spectral radius $\rho_{\cL\bmu(\theta)}$ of $\cL\bmu(\theta)$.
Notice further, that $\rho_{\cL\bmu(\theta)}$ is decreasing and continuous in $\theta \in \Dom(\cL\bmu) \cap \R$,
see e.g.\ \cite[Lemma A.2]{Kolesko+al:2025+} for a reference.

The following assumptions hold throughout the paper.
\begin{enumerate}[{\bf{(A}1)}]
	\setcounter{enumi}{\value{assumptions}}
	\item
	The spectral radius $\rho_{\bmu(0)}$ of the matrix $\bmu(0) = (\mu^{i,j}(0))_{i,j\in\p}$
	satisfies $\rho_{\bmu(0)} < 1$.				\label{ass:subcritical instant offspring}
	\item
	There exists a \emph{Malthusian parameter}, i.e., there exists an $\alpha > 0$ 
	such that the spectral radius $\rho_{\cL\bmu(\alpha)}$ of $\cL\bmu(\alpha)$ equals $1$.
	\label{ass:Malthusian parameter}
\setcounter{assumptions}{\value{enumi}}
\end{enumerate}
We suppose that $\alpha > 0$ is maximal with the property $\rho_{\cL\bmu(\alpha)}=1$.
Notice that $\cL\bmu(\theta) \to \rho_{\bmu(0)}$ as $\theta \to \infty$ by the dominated convergence theorem,
hence the spectral radius $\rho_{\cL\bmu(\theta)}$ of $\cL\bmu(\theta)$ is strictly smaller than $1$ eventually,
proving that if there exists some $\alpha>0$ with $\rho_{\cL\bmu(\alpha)}$, then there is also a maximal one.

Further, notice that (A\ref{ass:subcritical instant offspring}) and (A\ref{ass:Malthusian parameter})
combined imply that $(\cZ_t)_{t \geq 0}$ does not explode in finite time,
see e.g.\ \cite[Proposition 2.1]{Kolesko+al:2025+} for a recent discussion.

Notice that (A\ref{ass:Malthusian parameter}) implies the supercriticality of the general branching process
in the irreducible (primitive) case and hence that the survival set
\begin{align}
\Surv \coloneqq \bigcap_{n\in\N} \{\#\Gen_n \geq 1\}
\end{align}
satisfies $\Prob^i(\Surv)> 0$ for all $i \in \p$.
To see this, first notice that the mean offspring matrix satisfies $\cL\bmu(0) \geq \cL\bmu(\alpha)$ component-wise.
Further, (A\ref{ass:Malthusian parameter}) implies that $\cL\mu^{i,j}(0) > \cL\mu^{i,j}(\alpha)$ for at least one pair $(i,j) \in \p^2$.
In fact, since $\cL\bmu(\alpha)$ has dominant eigenvalue $1$, it has at least one positive entry,
that is, there exists a pair $(i,j) \in \p^2$ with $\cL\mu^{i,j}(\alpha)>0$.
This means that $\mu^{i,j}(0,\infty) > 0$ and hence, by monotonicity, $\cL\mu^{i,j}(0) > \cL\mu^{i,j}(\alpha)$ as claimed.
From the Perron-Frobenius theorem \cite[Theorem 1.1(e)]{Seneta:1981} (after truncation of infinite entries if necessary),
we conclude that $\cL\bmu(0)$ has principle eigenvalue $\rho > 1$. This implies $\Prob^i(\Surv)>0$
by classical theory for multi-type Galton-Watson process, see e.g.\ \cite[Theorem II.7.1]{Harris:1963}.

However, we do not make the assumption that the branching process is irreducible (equivalently, that $\cL\bmu(\alpha)$ is a primitive,
i.e., has a positive matrix power).

\subsection{Construction of complex Nerman-type martingales}	\label{subsec:construction of Nerman-type martingales}

The multi-type counterpart of the single-type equation $\cL\mu(\lambda)=1$
is the following characteristic equation, see also \cite{Kolesko+al:2025+}:
\begin{align}	\label{eq:characteristic equation}
\det(\Ip - \cL\bmu(\lambda)) = 0,
\end{align}
where $\Ip$ denotes the $p \times p$ identity matrix and $\lambda \in \Dom(\cL\bmu)$.
Henceforth, let $\lambda \in \interior{\Dom(\cL\bmu)}$, the interior of $\Dom(\cL\bmu)$,  
be a solution of \eqref{eq:characteristic equation}.
Notice that (A\ref{ass:subcritical instant offspring}) and (A\ref{ass:Malthusian parameter})
imply that $\det(\Ip-\cL\bmu(z)) \not = 0$ for all $z$ in an open neighborhood of $\lambda$.
Indeed, $\det(\Ip-\cL\bmu(z))$ is a holomorphic function of $z \in \interior{\Dom(\cL\bmu)}$
and it is not constant as it vanishes at $\alpha$ and is non-zero eventually by the discussion following (A\ref{ass:Malthusian parameter}).
Thus, the set of zeros has no accumulation point in $\interior{\Dom(\cL\bmu)}$.

Whenever $z \in \Dom(\cL\bmu)$ is such that $\det(\Ip-\cL\bmu(z)) \not = 0$,
we can invert $\Ip-\cL\bmu(z)$ using the formula
\begin{align}
(\Ip-\cL\bmu (z))^{-1}=\frac{1}{\det(\Ip-\cL\bmu(z))}\; \adj(\Ip-\cL\bmu(z)),
\end{align}
where $\adj(A)$ denotes the adjoint of the matrix $A$.
Letting $z \to \lambda$, we see that each entry of $(\Ip-\cL\bmu(z))^{-1}$ has a pole with multiplicity at most
the multiplicity of the zero of $\det(\Ip-\cL\bmu(z))$ at $z=\lambda$.
We write $\klambda \in \N$ for the maximal order of the poles  
of the entries of $(\Ip - \cL\bmu(z))^{-1}$ at the point $\lambda$.

Further, we denote by $a^{i,j}_{\lambda,k}$ the coefficient of $(z - \lambda)^{-k}$
in the Laurent expansion of the $(i,j)$-th entry of $(\Ip - \cL\bmu(z))^{-1}$
around $\lambda$, for $k \in \N$.
Notice that $a^{i,j}_{\lambda,k}=0$ for $k>\klambda$.
We define $A_{\lambda,k} \defeq (a^{i,j}_{\lambda,k})_{i,j \in \p}$
as the matrix collecting all coefficients corresponding to the term $(z - \lambda)^{-k}$.
Consequently, the resolvent $(\Ip - \cL\bmu(z))^{-1}$ admits the representation
\begin{align}	\label{eq:(Ip-Lmu)^-1 Laurent}
(\Ip - \cL\bmu(z))^{-1} = \sum_{n=1}^{\klambda} A_{\lambda,n} (z - \lambda)^{-n} + h_1(z),
\end{align}
where $h_1(z)$ is a vector-valued function with holomorphic components.
Similarly, expanding $\Ip-\cL\bmu(z)$ around $\lambda$, we obtain the Taylor series
\begin{align}	\label{eq:(Ip-Lmu) Taylor}
\Ip-\cL\bmu(z) = \sum_{m=0}^{\klambda-1}\frac{(\Ip-\cL\bmu(\lambda))^{(m)}}{m!} (z-\lambda)^m+h_2(z)(z-\lambda)^{\klambda}
\end{align}
with $h_2$ denoting another vector-valued holomorphic function.
Multiplying \eqref{eq:(Ip-Lmu)^-1 Laurent} on the left by \eqref{eq:(Ip-Lmu) Taylor} yields
\begin{align}
\Ip&=(\Ip-\cL\bmu(z)) (\Ip-\cL\bmu(z))^{-1}	\notag \\
&=\biggl( \sum_{m=0}^{\klambda-1}\frac{(\Ip-\cL\bmu(\lambda))^{(m)}}{m!} (z-\lambda)^m+h_2(z)(z-\lambda)^{\klambda} \biggl) \biggl( \sum_{n=1}^{\klambda}A_{\lambda,n} (z-\lambda)^{-n}+h_1(z) \biggl).
\end{align}
By expanding the product and comparing the coefficients on the left- and right-hand sides, we conclude
\begin{align}	\label{eq:coefficient of (z-lambda)^(j-klambda)}
\sum_{m=0}^{\klambda-j}  \frac{(\Ip-\cL\bmu(\lambda))^{(m)}}{m!} A_{\lambda, m+j}=0 \quad  \text{for } j=1,\ldots,\klambda.
\end{align}
Hence, 
\begin{align} \label{eq:A_lambda identities}
A_{\lambda, j}=\sum_{m=0}^{\klambda-j}  \frac{\cL\bmu^{(m)}(\lambda)}{m!} A_{\lambda,m+j}  \quad  \text{for } j=1,\ldots,\klambda.
\end{align}
These identities can also be captured in matrix form, by writing
\begin{align}	\label{eq:matrix of matrices}
\begin{pmatrix}
A_{\lambda,1} \\
\vdots \\
A_{\lambda,\klambda}
\end{pmatrix}&=\begin{pmatrix}
\cL\bmu(\lambda) & \cL\bmu^{'}(\lambda) & \frac{\cL\bmu^{''}(\lambda)}{2!}   & \cdots & \frac{\cL\bmu^{(\klambda-1)}(\lambda)}{(\klambda-1)!}  \\
0 &\cL\bmu(\lambda) & \cL\bmu^{'}(\lambda) & \cdots & \vdots \\
0& 0  &\cL\bmu(\lambda) & \ddots &  \frac{\cL\bmu^{''}(\lambda)}{2!} \\
\vdots & \ddots    & \ddots & \ddots & \cL\bmu^{'}(\lambda)  \\
0  & \cdots & 0 &0  & \cL\bmu(\lambda)
\end{pmatrix} \begin{pmatrix}
A_{\lambda,1} \\
\vdots \\
A_{\lambda,\klambda}
\end{pmatrix}.
\end{align}
We rewrite the block matrix in \eqref{eq:matrix of matrices} in the following form:
\begin{align}	\label{eq:int exp mu}
\int \begin{pmatrix}
 e^{-\lambda s}  & -s e^{-\lambda s}  &   \frac{s^2}{2!} e^{-\lambda s}   & \cdots &   \frac{(-s)^{\klambda-1}}{(\klambda-1)!} e^{-\lambda s}   \\
0  & e^{-\lambda s}  &  -s e^{-\lambda s}  & \cdots &\vdots \\
0 &0   &  e^{-\lambda s} & \ddots &  \frac{s^2}{2!} e^{-\lambda s}   \\
\vdots & \ddots    & \ddots & \ddots &   -s e^{-\lambda s}    \\
0 &0 & 0  & 0 &  e^{-\lambda s} 
\end{pmatrix}
\otimes \bmu(\ds)
= \int \exp(\lambda,-s) \otimes \bmu(\ds)
\end{align}
where $\otimes$ denotes the Kronecker product, see Section~\ref{sec:Kronecker} in the appendix for its definition
and a summary of relevant properties,
and the $\klambda \times \klambda$ matrix $\exp(\lambda,x)$ for $x \in \R$
is given by
\begin{align}	\label{eq:exp matrix}
\exp(\lambda,x)
&= e^{\lambda x}
\begin{pmatrix}
1 & x  &   \frac{x^2}{2!}  & \cdots &   \frac{x^{\klambda-1}}{(\klambda-1)!}   \\
0  & 1  &  x   & \cdots &\vdots \\
0 &0   &  1 & \ddots &  \frac{x^2}{2!}   \\
\vdots & \ddots    & \ddots & \ddots &   x    \\
0 &0 & 0  & 0 &  1 
\end{pmatrix}.
\end{align}
Matrices of this form are particularly useful, as they simplify the notation and facilitate the handling of polynomial terms.  
Note that, for any $x, y \in \R$, we have
\begin{align}
\exp(\lambda,x) \cdot \exp(\lambda,y)=\exp(\lambda,x+y).
\end{align}
Writing $\f_1,\ldots,\f_{\klambda}$
for the canonical basis of $\R^{1 \times \klambda}$,
we define $\vA_{\lambda}=\sum_{l=1}^{\klambda}\f_l^\transp \otimes  A_{\lambda,l}$.
Then we can rewrite \eqref{eq:matrix of matrices} in the form
\begin{align} \label{eq: A_lambda=int exp mu A_lambda}
\vA_{\lambda}= \biggl( \int \exp(\lambda,-s) \otimes \bmu(\ds ) \biggl) \vA_{\lambda}
= \E\biggl[ \int \exp(\lambda,-s)  \otimes \bxi(\ds)\biggl]  \vA_{\lambda} .
\end{align}
Returning to \eqref{eq:A_lambda identities}, we for any $u \in \I$, from the basic calculation rules for the Kronecker product we obtain
\begin{align*}
(\Ik\otimes\e_{\tau(u)})\vA_{\lambda} &= (\Ik\otimes\e_{\tau(u)})  \E\biggl[ \int \exp(\lambda,-s)  \otimes \bxi(\ds)\biggl]  \vA_{\lambda}  \\
&=  \E\biggl[ \int \exp(\lambda,-s)  \otimes \bxi_u^{\tau(u)}(\ds) \, \Big|\, \tau(u) \biggl]  \vA_{\lambda}.
\end{align*}
Now define the i.i.d., integrable random $\klambda p\times p$ matrices
\begin{align*}
Z_u(\lambda)\defeq \biggl( \int \exp(\lambda,-s)  \otimes \bxi_u(\ds) \biggl)  \vA_{\lambda},	\quad	u \in \I,
\end{align*}
$Z(\lambda) \defeq Z_\varnothing(\lambda)$,
to establish a more compact formulation of \eqref{eq: A_lambda=int exp mu A_lambda},
namely, $ \E[Z(\lambda)] = \vA_{\lambda}$.
Additionally,
\begin{align}
Z^{\tau(u)}_u(\lambda)=(\Ik\otimes \e_{\tau(u)})Z_u(\lambda)= \int (\exp(\lambda,-s)  \otimes \bxi_u^{\tau(u)}(\ds) )\vA_{\lambda}
\end{align}
so that $\E[Z^{\tau(u)}_u(\lambda) | \tau(u)]=(\Ik\otimes \e_{\tau(u)})\vA_{\lambda}$.
Define the random ($\klambda p \times p$)-matrix $Y_u \defeq Z_u(\lambda)-\vA_{\lambda}$, $u \in \I$.
As usual, we let $Y \defeq Y_\varnothing$.
Here, a basic observation is that $\E[Y]=0$ by \eqref{eq: A_lambda=int exp mu A_lambda}.

We now define the Nerman-type complex martingales in the multi-type setting.
To this end, we first define the $(kp\times p)$-matrix-valued characteristic $\phil$
which is central for our analysis, as it enables us to introduce the martingales
as multi-type general branching process counted with random characteristic $\phil$.
For $u \in \I$ and $t\in\R$, we define
\begin{align}	\label{eq:philambda}
\philu(t)\defeq \1_{[0,\infty)}(t)   \biggl(\int \1_{(t,\infty)}(s) \exp(\lambda,t-s)  \otimes \bxi_u(\ds) \biggl) \vA_{\lambda}
\end{align}
and $\phil \defeq \phi_{\lambda,\varnothing}$.
Notice that $\philu(t)$ is integrable with respect to $\Prob$ for every $t \geq 0$.
Now define
\begin{align} \label{eq:martingale}
W_t(\lambda) \defeq \exp(\lambda, -t) \cZ^{\phil}_t
\defeq \exp(\lambda, -t) \sum_{u \in \I} (\Ik \otimes \e_{\tau(u)}) \philu(t-S(u)), \quad t \geq 0.
\end{align}
We define $\F_t\defeq\sigma(\{ A\cap \{ S(u)\leq t\}:u\in\I,\ A\in\Gen_u \})$ for $t \geq 0$,
with $\A_u\defeq\sigma(\bxi_v: v \preceq u)$, $u \in \I$.
It is straightforward to check that $(\F_t)_{t \geq 0}$ is a filtration.

\begin{theorem}	\label{Thm: martingale}
Suppose that (A\ref{ass:subcritical instant offspring}) and (A\ref{ass:Malthusian parameter}) hold,
and let $\lambda \in \interior{\Dom(\cL\bmu)}$ satisfy $\det(\Ip-\cL\bmu(\lambda))=0$.
Then, $(W_t(\lambda))_{t \geq 0}$ is a martingale with respect to the filtration $(\F_t)_{t \geq 0}$ with 
\begin{align}	\label{eq:E^tau(u)W_t(lambda)}
\E^{\tau(\varnothing)}[W_t(\lambda)] =W_0(\lambda)=(\Ik \otimes \e_{\tau(\varnothing)}) \vA_\lambda.
\end{align}
Further, $W_t(\lambda)$ can be rewritten in the form
\begin{align}
W_t(\lambda )= \sum_{u\in \cC_t}    (\exp(\lambda, -S(u) ) \otimes \e_{\tau(u)}) \vA_\lambda
\label{eq: martingale2}
\end{align}
where $\cC_t \defeq \{uj \in \familytree : S(u) \leq t < S(uj)\}$ denotes the coming generation at time $t$,  
that is, the set of all individuals born strictly after time $t$, whose parents were born at or before time $t$.
\end{theorem}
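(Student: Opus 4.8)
My plan is to prove, in turn, the closed form \eqref{eq: martingale2}, then the integrability and adaptedness of $(W_t(\lambda))_{t\ge 0}$ together with the constancy of $\E^i[W_t(\lambda)]$, and finally the martingale property, which will follow from the first two points by decomposing along the random line $\cC_s$. For \eqref{eq: martingale2} I would simply unwind \eqref{eq:philambda}--\eqref{eq:martingale}: for $u\in\familytree$ with $S(u)\le t$, using $\e_{\tau(u)}\bxi_u(\ds)=\xi^{\tau(u)}_u(\ds)=\sum_k\delta_{X_{u,k}}(\ds)\,\e_{\tau(uk)}$ and the mixed-product rule for the Kronecker product, $(\Ik\otimes\e_{\tau(u)})\philu(t-S(u))=\sum_{k:\,X_{u,k}>t-S(u)}\bigl(\exp(\lambda,t-S(u)-X_{u,k})\otimes\e_{\tau(uk)}\bigr)\vA_\lambda$. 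Multiplying \eqref{eq:martingale} by $\exp(\lambda,-t)$ and invoking $\exp(\lambda,-t)\exp(\lambda,t-S(u)-X_{u,k})=\exp(\lambda,-S(uk))$ (because $S(uk)=S(u)+X_{u,k}$), the $(u,k)$-summand becomes $\bigl(\exp(\lambda,-S(uk))\otimes\e_{\tau(uk)}\bigr)\vA_\lambda$, while the constraints $S(u)\le t<S(uk)$ say precisely that $uk\in\cC_t$; summing over $u,k$ yields \eqref{eq: martingale2}. Running the same computation inside the subtree rooted at an individual $w$ identifies the associated martingale $W^{(w)}_r(\lambda)$ of that subtree with the right-hand side of \eqref{eq: martingale2}, now with $\cC$, the birth times $S$ and the types $\tau$ referring to that subtree; in particular $W^{(w)}_r(\lambda)=0$ for $r<0$.

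For integrability, the bound $\|\exp(\lambda,x)\|\le C(1+|x|^{\klambda-1})e^{(\Real\lambda)x}$ combined with $\lambda\in\interior{\Dom(\cL\bmu)}$ (which makes $\int e^{-(\Real\lambda)s}(1+s^{\klambda-1})\,\mu^{i,j}(\ds)$ finite) and (A\ref{ass:subcritical instant offspring}) (which, since $\rho_{\bmu([0,\varepsilon])}<1$ for small $\varepsilon$, bounds the expected number of individuals born by any fixed time) gives $\E^i\|W_t(\lambda)\|<\infty$; adaptedness of $W_t(\lambda)$ to $\F_t$ is clear from \eqref{eq: martingale2}. For the mean I would decompose the characteristic count along the first generation, $\cZ^{\phil}_t=(\Ik\otimes\e_{\tau(\varnothing)})\phil(t)+\sum_{k\in\Gen_1}\cZ^{\phil,(k)}_{t-S(k)}$ (the subtrees being independent), and take $\E^i$ of $W_t(\lambda)=\exp(\lambda,-t)\cZ^{\phil}_t$; writing $m^i(t)\defeq\E^i[W_t(\lambda)]$ for $t\ge0$ and $m^i(t)\defeq0$ for $t<0$, this produces the matrix renewal equation
\[
m^i(t)=(\Ik\otimes\e_i)\!\int_{(t,\infty)}\!\exp(\lambda,-s)\otimes\bmu(\ds)\,\vA_\lambda\;+\;\sum_j\int_{[0,t]}\exp(\lambda,-s)\,m^j(t-s)\,\mu^{i,j}(\ds),\qquad t\ge0 .
\]
Splitting the identity \eqref{eq: A_lambda=int exp mu A_lambda} at $t$ shows that the constant $m^i\equiv(\Ik\otimes\e_i)\vA_\lambda$ solves this equation. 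The difference of two locally bounded solutions solves the homogeneous equation, and since $\rho_{\bmu([0,\varepsilon])}<1$ for small $\varepsilon$ by (A\ref{ass:subcritical instant offspring}) and $\|\exp(\lambda,\cdot)\|$ is bounded on compacts, a contraction estimate on successive intervals of length $\varepsilon$ forces it to vanish. Hence $\E^i[W_t(\lambda)]=(\Ik\otimes\e_i)\vA_\lambda$ for every $t\ge0$, which in particular gives the value recorded in \eqref{eq:E^tau(u)W_t(lambda)}.

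For the martingale property, fix $0\le s\le t$. Each $v\in\cC_t$ has $S(v)>t\ge s$, so its ancestral line crosses time $s$ at a unique ancestor lying in $\cC_s$; grouping the elements of $\cC_t$ by this ancestor gives the disjoint decomposition $\cC_t=\{w\in\cC_s:S(w)>t\}\ \sqcup\ \{wv:w\in\cC_s,\ S(w)\le t,\ v\in\cC^{(w)}_{t-S(w)}\}$, and therefore, by \eqref{eq: martingale2} and the subtree form above,
\[
W_t(\lambda)=\sum_{\substack{w\in\cC_s\\ S(w)>t}}\bigl(\exp(\lambda,-S(w))\otimes\e_{\tau(w)}\bigr)\vA_\lambda\;+\;\sum_{\substack{w\in\cC_s\\ S(w)\le t}}\exp(\lambda,-S(w))\,W^{(w)}_{t-S(w)}(\lambda) .
\]
Now $\cC_s$, and $S(w),\tau(w)$ for $w\in\cC_s$, are $\F_s$-measurable (directly from the definition of $\F_s$: the event $\{uk\in\cC_s\}=\{S(u)\le s<S(uk)\}$ and the quantities $S(uk),\tau(uk)$ on it are functions of the offspring processes of individuals born by time $s$), so the first sum and the scalar factors $\exp(\lambda,-S(w))$ of the second sum factor out of $\E[\,\cdot\mid\F_s]$. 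By the strong branching property at the optional line $\cC_s$, conditionally on $\F_s$ the subtrees rooted at the $w\in\cC_s$ are independent, each a copy of the whole process with a type-$\tau(w)$ ancestor and independent of $\F_s$; hence $\E[W^{(w)}_{t-S(w)}(\lambda)\mid\F_s]=m^{\tau(w)}(t-S(w))=(\Ik\otimes\e_{\tau(w)})\vA_\lambda$ by the preceding step (note $t-S(w)\ge0$). Substituting, the two sums recombine into $\sum_{w\in\cC_s}\bigl(\exp(\lambda,-S(w))\otimes\e_{\tau(w)}\bigr)\vA_\lambda=W_s(\lambda)$, so $\E[W_t(\lambda)\mid\F_s]=W_s(\lambda)$.

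I expect the main obstacle to be the last step's use of the branching decomposition at the \emph{random} line $\cC_s$: one must verify carefully that $\F_s$ determines $\cC_s$ together with the birth times and types of its members while remaining independent of the offspring processes that generate the subtrees hanging from $\cC_s$. This optional-line/strong-branching statement --- rather than the explicit algebra --- is where the precise definitions of $(\F_t)_{t\ge0}$ and of $\cC_t$ are essential, and I would isolate it as a lemma or cite it from the general theory of Crump--Mode--Jagers processes. The remaining, purely computational, difficulty is the uniqueness half of the renewal equation, which genuinely needs (A\ref{ass:subcritical instant offspring}): without control on co-temporal individuals the homogeneous equation need not be a strict contraction near $t=0$ and could admit nonzero solutions.
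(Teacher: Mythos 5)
Your argument is correct in outline and reaches the same conclusions, but it takes a genuinely different route from the paper's proof. The paper never invokes a renewal equation or an optional line: after deriving \eqref{eq: martingale2} exactly as you do, it applies the telescoping identity $\1_{\{S(u)\leq t<S(uj)\}}=\1_{\{S(u)\leq t\}}-\1_{\{S(uj)\leq t\}}$ to obtain the increment representation $W_t(\lambda)-W_s(\lambda)=\sum_{u\in\I}\1_{\{s<S(u)\leq t\}}(\exp(\lambda,-S(u))\otimes\e_{\tau(u)})Y_u$ with $\E[Y_u]=0$, and then checks that each summand has vanishing conditional expectation by testing against the generating $\pi$-system $\{A\cap\{S(v)\leq s\}:A\in\A_v\}$ of $\F_s$ (using that $\bxi_u$ is independent of the relevant $\sigma$-algebras when $u\not\preceq v$) and closing with Dynkin's $\pi$-$\lambda$ theorem; \eqref{eq:E^tau(u)W_t(lambda)} then falls out for free. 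Your route --- branching decomposition over the coming generation $\cC_s$ plus constancy of the mean via a Markov renewal equation --- is the classical Nerman-style argument and is more conceptual, but it buys this at the cost of two nontrivial inputs that the paper's proof avoids entirely and that you correctly flag but do not supply: (i) the strong branching property at the optional line $\cC_s$ relative to the specific filtration $\F_s=\sigma(\{A\cap\{S(u)\leq s\}\})$, including $\F_s$-measurability of $\cC_s$, $S(w)$, $\tau(w)$ and the conditional independence of the daughter subtrees (this must be proved or cited, and the interchange of $\E[\,\cdot\mid\F_s]$ with the sum over $\cC_s$ needs an $L^1$ domination since $\cC_s$ may be infinite when $\Prob(N^{i,j}=\infty)>0$); and (ii) uniqueness for the homogeneous renewal equation, where your one-step bound is not literally a contraction --- one must iterate the convolution operator and use $\rho_{\bmu([0,\varepsilon])}<1$ from (A1) to kill the homogeneous solution on $[0,\varepsilon]$ before stepping forward. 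Both of these are standard and repairable, so your proof is viable, but the paper's $Y_u$-based argument is shorter and self-contained.
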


\begin{remark}
	Let $\alpha$ be such that the matrix
	$\cL\bmu(\alpha)$ is primitive and has spectral radius~$1$. As we described in the Introduction, the classical Nerman's martingale in such a case is a multiple of 
	\[
	V_t\defeq\sum_{u\in\cC_t} v_{\tau(u)}e^{-\alpha S(u)}  .
	\]
	A natural question is how $V_t$ is related to the matrix‑valued martingale
	$W_t(\alpha)$ introduced in our Theorem~\ref{Thm: martingale}.  Because every coordinate of $W_t(\alpha)$ is itself a martingale,
	one might wonder whether still other real valued martingales can be obtained, for instance by inserting terms such as $S(u)^j e^{-\alpha S(u)}$.
	
	We claim  that in this setting the order of singularity 
	$\mathtt{k}_\alpha$ equals~$1$ and that
	\[
	A_{\alpha,1} = a\,\pi_{\mathbf v},
	\]
	where $a\in\mathbb{R}$ and
	$\pi_{\mathbf v}=\mathbf v\mathbf w$ is the orthogonal projection onto the
	one‑dimensional eigenspace spanned by~$\mathbf v$; here $\mathbf w$ is the unique
	left (row) eigenvector of $\cL\bmu(\alpha)$ with
	$\mathbf w\mathbf v = 1$.  Consequently,
	\begin{align*}
		W_t(\alpha)
		=\sum_{u\in\cC_t} e^{-\alpha S(u)}\,
		\e_{\tau(u)}A_{\alpha,1}
		= a\sum_{u\in \cC_t} e^{-\alpha S(u)}\,\mathbf v_{\tau(u)}\,\mathbf w
		= a\,V_t\,\mathbf w,
	\end{align*}
	so every coordinate of $W_t(\alpha)$ is merely a multiple of the martingale $V_t$.
	
	It remains to justify the claim above.  For
	$s\in\Dom(\cL\bmu)$ define
	$\rho(s)=\rho (\cL\bmu(s))$, the spectral radius of
	$\cL\bmu(s)$.  Perron–Frobenius implies that $\rho(s)$ is the
	dominant eigenvalue, all others having strictly smaller modulus.  Because the eigenvalues of a matrix depend continuously on its coefficients (see, e.g.,
	\cite[Theorem~3.1.2]{Ortega:1972}), we may write
	\begin{align*}
	\det (I-\cL\bmu(s))
	\;=\; (1-\rho(s) )\,h(s),
	\end{align*}
	where $h$ is continuous and bounded away from~$0$ in a neighbourhood of~$\alpha$.
	It is known that the function $\rho$ is strictly decreasing and convex (see \cite[Proposition~A.1. and Lemma~A.2.]{Kolesko+al:2025+}). This implies that $\alpha$ is a simple root $\rho(s)=1$. Therefore, the function $\det\bigl(I - \mathcal{L}\boldsymbol{\mu}(s)\bigr)$ has a simple zero at $s = \alpha$, which in turn means that the matrix-valued function $(I - \mathcal{L}\boldsymbol{\mu}(s))^{-1}$ has a simple pole at~$\alpha$. Consequently, the multiplicity $\mathtt{k}_\alpha$ equals~$1$.

	Finally, from~\eqref{eq:matrix of matrices} we have
	$A_{\alpha,1}= \mathcal{L}\boldsymbol{\mu}(\alpha)A_{\alpha,1}$. Similar argument yields $A_{\alpha,1}=A_{\alpha,1} \mathcal{L}\boldsymbol{\mu}(\alpha)$.
	Iterating these identities and letting the number of iterations tend to infinity gives
	\[
	A_{\alpha,1}=\pi_{\mathbf v}A_{\alpha,1},
	\qquad
	A_{\alpha,1}=A_{\alpha,1}\pi_{\mathbf v},
	\]
	so $\operatorname{im}(A_{\alpha,1})=\mathbb{R}\mathbf v$ and
	$\ker (A_{\alpha,1})=\{\mathbf v\}^\perp$.  Thus $A_{\alpha,1}$ is indeed a scalar
	multiple of the projection~$\pi_{\mathbf v}$.
\end{remark}

\subsection{$L^q$-convergence for $1 < q \leq 2$}	\label{subsec:L^q convergence}

Consider the situation of Theorem \ref{Thm: martingale}.
In this section, we provide sufficient conditions for the convergence of $(W_t(\lambda))_{t \geq 0}$ in $L^q$ for $1 < q \leq 2$.  
While our primary interest---motivated by a central limit theorem for the multi-type Crump-Mode-Jagers process---lies in the case $q = 2$,
we also include the cases $1 < q < 2$, as they require little additional effort.
We consider the martingale in the form
\begin{align*}
W_t(\lambda)
=\1_{[0,\infty)}(t) (\Ik \otimes \e_{\tau(\varnothing)}) \vA_\lambda+ \sum_{u\in\I} \1_{\{S(u)\leq t\}} (\exp(\lambda,-S(u) ) \otimes \e_{\tau(u)}) Y_u.
\end{align*}
Naturally, for $(W_t(\lambda))_{t \geq 0}$ to be an $L^q$-martingale,  
we require that $\E[\|Y_u\|^q] < \infty$, where $\|\cdot\|$ denotes the operator matrix norm.  
Equivalently, this condition holds with the Hilbert--Schmidt norm, i.e., $\E[\|Y_u\|_\HS^q] < \infty$,  
where $\|\cdot\|_\HS$ denotes the Hilbert--Schmidt norm.  
As we shall see, the following assumption is sufficient to guarantee this:
\begin{enumerate}[{\bf{(A}1)}]
	\setcounter{enumi}{\value{assumptions}}
	\item	$\displaystyle	\E\biggl[ \biggl\lVert \int\Big(1+ s^{\klambda-1}\Big) e^{-\theta s} \, \bxi(\ds) \biggl\rVert^q \biggl]<\infty$
	\label{ass:L^q norm}
\setcounter{assumptions}{\value{enumi}}
\end{enumerate}
where $\theta = \Real(\lambda)$.

Our main result regarding convergence in $L^q$ is the following:

\begin{theorem}	\label{Thm:martingale convergence L^q}
Suppose that (A\ref{ass:subcritical instant offspring}) and (A\ref{ass:Malthusian parameter}) hold,
and let $\lambda \in \interior{\Dom(\cL\bmu)}$ satisfy $\det(\Ip-\cL\bmu(\lambda))=0$.
Further suppose that, for some $1 < q \leq 2$, (A\ref{ass:L^q norm}) holds.
If $\Real(\lambda)>\frac{\alpha}{q}$, then $(W_t(\lambda))_{t \geq 0}$ is an $L^q$-bounded martingale.
\end{theorem}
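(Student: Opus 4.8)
The plan is to prove $L^q$-boundedness of $(W_t(\lambda))_{t \geq 0}$ by a stopping-line / first-moment decomposition that reduces the problem to a contraction argument along generations, exactly in the spirit of the Biggins--Lyons "spine" or "many-to-one" approach used for Nerman's martingale, but now adapted to the matrix-valued, polynomially-weighted setting encoded in $\exp(\lambda,\cdot)$. First I would record that, by the martingale property (Theorem \ref{Thm: martingale}) and $L^q$-boundedness being equivalent to $\sup_t \E^i[\|W_t(\lambda)\|^q] < \infty$, it suffices to bound the $q$-th moments uniformly. Using the representation
\begin{align*}
W_t(\lambda) = \1_{[0,\infty)}(t)(\Ik \otimes \e_{\tau(\varnothing)})\vA_\lambda + \sum_{u \in \I} \1_{\{S(u) \leq t\}} (\exp(\lambda,-S(u)) \otimes \e_{\tau(u)}) Y_u,
\end{align*}
I would split according to the generation $|u|$ of $u$: fix a cut generation and treat the contribution of the first $n$ generations directly (finitely many integrable terms, bounded uniformly in $t$ once we also control the martingale increments), and handle the tail $|u| \geq n$ via the branching structure, conditioning on $\F$ restricted to generation $n$ and using the i.i.d.\ copies $(\bxi_{uv})_{v}$ attached below each $u \in \Gen_n$.

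The core estimate is a one-step contraction. The key point is that (A\ref{ass:L^q norm}) with $\theta = \Real(\lambda)$ gives $\E[\|Y\|^q] < \infty$: indeed $\|Y\| = \|Z(\lambda) - \vA_\lambda\| \leq \|Z(\lambda)\| + \|\vA_\lambda\|$ and $\|Z(\lambda)\| \leq \|\vA_\lambda\| \, \|\int \exp(\lambda,-s) \otimes \bxi(\ds)\|$, while the entries of $\exp(\lambda,-s)$ are bounded in modulus by a constant times $(1 + s^{\klambda - 1}) e^{-\theta s}$, so $\|\int \exp(\lambda,-s) \otimes \bxi(\ds)\| \leq C \|\int (1 + s^{\klambda-1}) e^{-\theta s} \bxi(\ds)\|$, whose $q$-th moment is finite by (A\ref{ass:L^q norm}). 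Next, for $1 < q \leq 2$ I would invoke the von Bahr--Esseen / Topchii--Vatutin inequality: for a martingale-difference sum (here the increments of $W_t(\lambda)$ across individuals are martingale differences with respect to the natural order on $\I$, since $\E[Y_u \mid \F_{S(u)-}] = 0$), $\E[\|\sum_u D_u\|^q] \leq 2 \sum_u \E[\|D_u\|^q]$. Applying this with $D_u = \1_{\{S(u) \leq t\}}(\exp(\lambda,-S(u)) \otimes \e_{\tau(u)}) Y_u$, and using $\|\exp(\lambda,-S(u))\| \leq C(1 + S(u)^{\klambda-1}) e^{-\theta S(u)}$ together with the independence of $Y_u$ from $S(u)$ and $\tau(u)$, I reduce to showing
\begin{align*}
\sum_{u \in \I} \E^i\big[(1 + S(u)^{\klambda-1})^q e^{-q\theta S(u)}\big] < \infty.
\end{align*}
By a many-to-one (spinal) identity this sum equals a functional of the $n$-fold convolution powers of the measures $e^{-\theta s}\bmu^{i,j}(\ds)$; summing over generations produces a Neumann series in the matrix $\cL\bmu(q\theta)$ (plus polynomial corrections from the $S(u)^{\klambda-1}$ factors, which amount to differentiating $\cL\bmu$ in $\theta$). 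This series converges precisely when the spectral radius $\rho_{\cL\bmu(q\theta)} < 1$, and since $\rho_{\cL\bmu(\cdot)}$ is strictly decreasing with $\rho_{\cL\bmu(\alpha)} = 1$, the condition $q\theta > \alpha$, i.e.\ $\Real(\lambda) = \theta > \alpha/q$, gives exactly $\rho_{\cL\bmu(q\theta)} < 1$. The polynomial weights $S(u)^{\klambda-1}$ only contribute finitely many $\theta$-derivatives of $\cL\bmu$, each of which is still a finite nonnegative matrix with the same spectral-radius control, so they do not spoil convergence.

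The main obstacle I anticipate is the bookkeeping in the many-to-one computation with the polynomial factors: one must make precise that $\sum_n \|(\cL\bmu(q\theta))^n\|$ and its finitely many $\theta$-derivatives are summable, which is cleanest via the Kronecker-product formalism already set up in the paper — the matrix $\E[\int \exp(q\theta, -s) \otimes \bxi(\ds)]$ (or rather a version built from $q\theta$ and block size $\klambda$) has spectral radius $\rho_{\cL\bmu(q\theta)} < 1$, so its Neumann series converges in norm, and this single bound simultaneously dominates all the polynomial-in-$S(u)$ terms. A secondary point requiring care is justifying that the partial sums over $|u| \leq n$ are bounded uniformly in $t$ and that, letting $n \to \infty$, the tail contributes a vanishing amount — this follows from the same geometric decay. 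Once these are in place, combining the finite-generation bound with the von Bahr--Esseen bound on the tail yields $\sup_t \E^i[\|W_t(\lambda)\|^q] < \infty$, i.e.\ $L^q$-boundedness, for each ancestor type $i$, completing the proof.
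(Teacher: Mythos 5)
Your proposal follows essentially the same route as the paper's proof: reduce to the representation of $W_t(\lambda)$ in terms of the centered increments $Y_u$, deduce $\E[\|Y\|^q]<\infty$ from (A\ref{ass:L^q norm}) via $\|\exp(\lambda,-s)\|\leq C(1+s^{\klambda-1})e^{-\Real(\lambda)s}$, apply von Bahr--Esseen generation by generation (and conditionally within each generation), and sum the resulting many-to-one series using that the spectral radius of $\cL\bmu$ evaluated beyond $\alpha$ is strictly less than $1$. The only cosmetic difference is that you dominate the polynomial factors $S(u)^{\klambda-1}$ by $\theta$-derivatives of $\cL\bmu$ (equivalently, the Kronecker block matrix at $q\theta$), whereas the paper absorbs them into $e^{q\delta S(u)}$ for a small $\delta>0$ with $q(\Real(\lambda)-\delta)>\alpha$; both are valid.
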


\begin{remark}
	The condition $\Real(\lambda)>\frac{\alpha}{q}$ is sharp only in the primitive case, i.e., when $\cL\bmu(\alpha)$
	has a matrix power with positive entries only. In the reducible case, the condition may be relaxed in some cases.
	We refrain from providing more details.
\end{remark}

\section{Examples}	\label{sec:Examples}

Before turning to the proofs of our main results, we first present a few examples.
The first example illustrates what can happen in the non-primitive case,
namely, if the process is started with a type that is subcritical, the corresponding martingale may be trivial.

\begin{example}
Consider a multitype general branching process with two types, labeled $1$ and $2$.  
Individuals of type $1$ produce both type-$1$ and type-$2$ offspring at constant rate $1$,  
whereas individuals of type $2$ do not produce type-$1$ offspring.
Instead, each type-$2$ individual either has a single child at a random (exponential) time with probability $1/2$,
or no offspring at all, also with probability $1/2$.  
All reproduction events are assumed to be independent.

More formally, we assume that $\xi^{1,1}$, $\xi^{1,2}$, $B$, and $\zeta$ are independent.  
The processes $\xi^{1,1}$ and $\xi^{1,2}$ are homogeneous Poisson point processes on $[0, \infty)$ with rate $1$.  
Type $2$-individuals do not produce type $1$ offspring, i.e., $\xi^{2,1} = 0$, and produce at most one type-$2$ offspring:
specifically, $\xi^{2,2} = B \delta_{\zeta}$,  
where $B \in \{0,1\}$ is a Bernoulli random variable with $\Prob(B = 1) = \Prob(B = 0) = \frac{1}{2}$,  
and $\zeta$ is an exponential random variable with unit mean.

Note that if the population starts with an individual of the second type,
the entire population eventually becomes extinct, as the corresponding Galton--Watson process is subcritical.
The associated Laplace transform $\cL\bmu(z)$ takes the form:
\begin{align*}
\cL\bmu(z) =
\begin{pmatrix}
\vspace{3pt} \tfrac{1}{z} & \tfrac{1}{z} \\
0 & \tfrac{1}{2(1+z)}
\end{pmatrix}.
\end{align*}
One can easily verify that $\alpha = 1$ and
\begin{align*}
(z-1)(\Ip - \cL\bmu(z))^{-1} \to
\begin{pmatrix}
1 & \tfrac{4}{3} \\
0 & 0
\end{pmatrix},
\end{align*}
as $z \to 1$. The martingale $W_t(1)$ corresponding to the parameter $\alpha = 1$ is given by
\begin{align*}
W_t(1) = \sum_{u \in \cC_t} e^{-S(u)} \e_{\tau(u)}
\begin{pmatrix}
1 & \tfrac{4}{3} \\
0 & 0
\end{pmatrix}
= \sum_{\substack{u \in \cC_t:\\ \e_{\tau(u)} = 1}} e^{-S(u)}
\begin{pmatrix}
1 & \tfrac{4}{3}
\end{pmatrix},
\end{align*}
and $W_0(1) = \1_{\{\tau(\varnothing) = 1\}}
\begin{pmatrix}
1 & \tfrac{4}{3}
\end{pmatrix}$.
In particular, when the branching process starts with an individual of type 2,
the martingale $W_t(1) = (0\, 0)$ for all $t \geq 0$.
\end{example}

\begin{example}
	Consider the process introduced in Example~4.5 of~\cite{Kolesko+al:2025+}. It is a two-type process in which particles of type 1 give birth to type-1 particles according to a Poisson point process with intensity $\alpha>0$, and particles of type 2 independently produce type 2 offspring, also according to a Poisson point process with intensity $\alpha$. Additionally, each type 1 particle immediately produces exactly one type 2 particle at its time of birth.
	
	In this case, the multiplicity is $\mathtt k_\alpha=2$, and the corresponding matrices associated with the singularities are given by
	\begin{align*}
		A_{\alpha,2}=
		\begin{pmatrix}
			0 & \alpha^2\\
			0 & 0
		\end{pmatrix}
		\quad \text{and} \quad
		A_{\alpha,1}=
		\begin{pmatrix}
			\alpha & 2\alpha\\
			0 & \alpha
		\end{pmatrix}.
	\end{align*}
	This allows us to explicitly express the martingale $W_t(\alpha)$ as follows:
	\begin{align*}
		W_t(\alpha)= &\sum_{u\in \cC_t} e^{-\alpha S(u)}  
		\left( \begin{pmatrix}
			1 & -S(u)\\
			0 & 1
		\end{pmatrix}
		\otimes \e_{\tau(u)}\right) (\f_1^\transp \otimes  A_{\alpha,1} + \f_2^\transp \otimes  A_{\alpha,2})\\
		= &\sum_{u\in \cC_t} e^{-\alpha S(u)}  
		\bigg( \begin{pmatrix}
			1 \\
			0 
		\end{pmatrix}
		\otimes 
		\Big(\begin{pmatrix}
			\alpha & 2\alpha
		\end{pmatrix}\1_{\{\tau(u) = 1\}}
		+
		\begin{pmatrix}
			0 & \alpha
		\end{pmatrix}\1_{\{\tau(u) = 2\}}\Big)		\\
		&\phantom{\sum_{u\in \cC_t} e^{-\alpha S(u)}  
			\bigg( }+
		\begin{pmatrix}
			-S(u) \\
			1
		\end{pmatrix}
		\otimes 
		\begin{pmatrix}
			0 & \alpha^2
		\end{pmatrix}\1_{\{\tau(u) = 1\}}
		\bigg)\\
		=&\sum_{u\in \cC_t} e^{-\alpha S(u)}
		\begin{pmatrix}
			\alpha \1_{\{\tau(u) = 1\}} & \big(2\alpha-\alpha^2 S(u)\big)\1_{\{\tau(u) = 1\}} + \alpha\1_{\{\tau(u) = 2\}}\\
			0 & \alpha^2\1_{\{\tau(u) = 1\}}
		\end{pmatrix}.
	\end{align*}
	Thus, up to multiplicative constants, there are exactly two distinct martingales:
	\begin{align*}
		\sum_{u\in \cC_t} e^{-\alpha S(u)}\1_{\{\tau(u) = 1\}} \quad\text{and}\quad
		\sum_{u\in \cC_t} e^{-\alpha S(u)} \big[\big(2-\alpha S(u)\big)\1_{\{\tau(u) = 1\}} + \1_{\{\tau(u) = 2\}}\big].
	\end{align*}
	Moreover, under the measure $\Prob^1$, both the martingales are non-degenerate.
\end{example}

\section{Proofs of the main results}	\label{sec:proofs}

\begin{proof}[Proof of Theorem \ref{Thm: martingale}]
First, we consider $\cZ^{\phil}_t$ for $t \geq 0$. We write
\begin{align*}
\cZ^{\phil}_t &= \sum_{u\in\I} (\Ik\otimes \e_{\tau(u)}) \philu(t-S(u)) \\
&= \sum_{u\in\I} \1_{\{S(u)\leq t\}}  \biggl(\int_{(t-S(u),\infty)} \exp(\lambda,t-S(u)-s)  \otimes \bxi_u^{\tau(u)}(\ds) \biggl)\vA_\lambda.
\end{align*}
Notice that by (A\ref{ass:subcritical instant offspring}) and (A\ref{ass:Malthusian parameter}),
for any $t \geq 0$,
with probability $1$, we have $S(u)> t$ for all but finitely many $u \in \I$.
Hence the sum above is almost surely a finite sum of integrable random variables and thus finite almost surely,
showing that $\cZ^{\phil}_t$ and hence also $W_t(\lambda)$ is well-defined.
Using $\exp(\lambda,-t)=\exp(\lambda,-t)\otimes 1$, we obtain
\begin{align} \label{eq:W_t starting point}
W_t(\lambda) = \exp(\lambda,-t)\cZ^{\phil}_t
&= \sum_{u\in\I} \1_{\{S(u)\leq t\}}  \biggl(\int_{(t-S(u),\infty)} \exp(\lambda,-S(u)-s)  \otimes \bxi_u^{\tau(u)}(\ds) \biggl)\vA_\lambda \notag \\
&= \sum_{u\in\I} \sum_{j=1}^{N_u}\1_{\{S(u)\leq t<S(uj)\}} (\exp(\lambda,-S(uj))  \otimes \e_{\tau(uj)}) \vA_\lambda .
\end{align}
Substituting $\1_{\{S(u)\leq t<S(uj)\}}=\1_{\cC_t}(uj)$ in \eqref{eq:W_t starting point} gives \eqref{eq: martingale2}.
We further deduce
\begin{align*}
W_t(\lambda)
&= \sum_{u\in\I} \sum_{j=1}^{N_u}\1_{\{S(u)\leq t<S(uj)\}} (\exp(\lambda,-S(uj))  \otimes \e_{\tau(uj)}) \vA_\lambda \\
&= \sum_{u\in\I} \sum_{j=1}^{N_u}(\1_{\{S(u)\leq t\}}-\1_{\{S(uj)\leq t\}}) (\exp(\lambda,-S(uj))  \otimes \e_{\tau(uj)}) \vA_\lambda	\\
&= \sum_{u\in\I}   \1_{\{S(u)\leq t\}} (\exp(\lambda,-S(u))\otimes 1) \biggl( \sum_{j=1}^{N_u}  \exp(\lambda,-X_{u,j}) \otimes \e_{\tau(uj)}  \biggl)   \vA_\lambda   \\
&\hphantom{=}-\sum_{|u|\geq 1} \1_{\{S(u)\leq t\}}  (\exp(\lambda,-S(u)) \otimes \e_{\tau(u)})  \vA_\lambda.
\end{align*}
Note that the decomposition of the sum is justified, since both sums almost surely contain only finitely many nonzero terms.
We proceed by writing
\begin{align}
W_t(\lambda)
&= \sum_{u\in\I} \1_{\{S(u)\leq t\}} (\exp(\lambda,-S(u))\otimes 1) \biggl(\sum_{j=1}^{N_u} \exp(\lambda,-X_{u,j}) \otimes \e_{\tau(uj)}  \biggl)   \vA_\lambda   \notag	\\
&\hphantom{=}-\sum_{|u|\geq 1} \1_{\{S(u)\leq t\}}  (\exp(\lambda,-S(u)) \otimes \e_{\tau(u)})  \vA_\lambda	\notag \\
&=\1_{[0,\infty)}(t)     (\Ik \otimes \e_{\tau(\varnothing)})\vA_\lambda+ \sum_{u\in\I}    \1_{\{S(u)\leq t\}} (\exp(\lambda,-S(u))\otimes \e_{\tau(u)}) Y_u.	\label{eq:W_tlambda Y_u representation}
\end{align}
At this point, it can be readily seen that $W_t(\lambda)$ is $\F_t$-measurable.  
Indeed, this follows from the fact that each summand in the preceding expression is $\F_t$-measurable,  
which, in turn, can be verified directly from the definition of $\F_t$ and basic measurability arguments.
The next item on the agenda is the proof of the integrability of $W_t(\lambda)$.
We estimate as follows, using the matrix operator norm $\|\cdot\|$,
\begin{align*}
\E\bigg[\bigg\|\sum_{u\in\I} \1_{\{S(u)\leq t\}} (\exp(\lambda,-S(u))\otimes \e_{\tau(u)}) Y_u\bigg\|\bigg]
&\leq \sum_{u\in\I} \E\big[\1_{\{S(u)\leq t\}} \|\exp(\lambda,-S(u))\otimes \e_{\tau(u)}\| \| Y_u\|\big]	\\
&\leq C \E\bigg[\sum_{u\in\I} \big[\1_{\{S(u)\leq t\}}\bigg]
\end{align*}
for some finite constant $C>0$. It is well known in the theory of multi-type branching processes,
see e.g.\ \cite[Proposition 2.1]{Kolesko+al:2025+} that
under the assumptions (A\ref{ass:subcritical instant offspring}) and (A\ref{ass:Malthusian parameter})
the expected number of births up to and including time $t$ is finite for every $t \geq 0$.

The final property to be verified is the martingale property.  
To this end, let us fix $0 \leq s < t$. Then,
\begin{align*}
W_t(\lambda)-W_s(\lambda) &= \sum_{u\in\I}  \1_{\{s<S(u)\leq t\}} (\exp(\lambda,-S(u))\otimes \e_{\tau(u)}) Y_u.
\end{align*}
Here, it suffices to show that
\begin{align*}
\E\big[  \1_{\{s<S(u)\leq t\}}   (\exp(\lambda,-S(u))\otimes\e_{\tau(u)}) Y_u \, | \, \F_s \big]=0 \quad \text{a.\,s. }
\end{align*}
To prove this, we first claim that, for any $v \in \I$ and $A\in \A_v$,
\begin{align*}
\E [ \1_{A\cap\{S(v)\leq s \}}  \1_{\{s<S(u)\leq t\}}   (\exp(\lambda,-S(u) )\otimes\e_{\tau(u)}) Y_u]=0.
\end{align*}
Indeed, if $u \preceq v$, then $S(u) \leq S(v)$, and thus $\1_{\{S(v) \leq s\}} \, \1_{\{s < S(u) \leq t\}} = 0$.  
On the other hand, if $u \not\preceq v$, then $\sigma(\A_v \cap \A_{u|_{|u|-1}})$ is independent of $\bxi_u$, and in particular of $Y_u$.  
Consequently, the expectation again vanishes, since $\E[Y_u] = 0$.
The argument carries over if we take a finite intersection of sets of the form $A_u \cap \{S(u)\leq s\}$ for $A_u\in\A_u$
and different $u\in\I$. Dynkin's $\pi$-$\lambda$ theorem then implies the martingale property. In particular, \eqref{eq:E^tau(u)W_t(lambda)} holds.
\end{proof}

\begin{proof}[Proof of Theorem \ref{Thm:martingale convergence L^q}]
Throughout the proof, we write $\theta \defeq \Real(\lambda)$.
We first show that (A\ref{ass:L^q norm}) implies $\E[\|Y\|^q]<\infty$.
Using the submultiplicative property of matrix norms, we obtain,
with $\|\cdot\|_\HS$ denoting the Hilbert-Schmidt norm,
\begin{align*}
\E[\lVert Y\rVert^q_\HS]&= \E\biggl[ \biggl\lVert \biggl( \int \exp(\lambda,-s)  \otimes \bxi(\ds)-(\Ik\!\otimes\Ip) \biggl)  \vA_{\lambda}  \biggl\rVert^q_\HS \biggl] \\
&\leq \lVert  \vA_{\lambda}\rVert^q_\HS  \E\biggl[ \biggl\lVert \biggl( \int \exp(\lambda,-s)  \otimes \bxi(\ds)-(\Ik\!\otimes\Ip) \biggl)  \biggl\rVert^q_\HS  \biggl] .
\end{align*}
Since for any $p\times p$ matrix $A$, it holds that $\lVert A-\Ip \rVert^q_\HS\leq (\lVert A \rVert_\HS+\lVert \Ip \rVert_\HS)^q \leq c_q(\lVert A \rVert^q_\HS+1)$ for some finite constant $c_q>0$, we conclude
\begin{align*}
\E[\lVert Y\rVert^q_\HS]
&\leq
c_q \lVert  \vA_{\lambda}\rVert^q_\HS
\E\biggl[ \biggl\lVert \biggl( \int \exp(\lambda,-s)  \otimes \bxi(\ds)-(\Ik\!\otimes\Ip) \biggl)  \biggl\rVert^q_\HS  \biggl] \\
&\leq c_q \lVert  \vA_{\lambda}\rVert^q_\HS \biggl(1+  \E\biggl[ \biggl\lVert \biggl( \int \exp(\lambda,-s)  \otimes \bxi(\ds) \biggl)  \biggl\rVert^q_\HS  \biggl] \biggl) \\
&\leq C c_q \lVert  \vA_{\lambda}\rVert^q_\HS \biggl(1+ \sum_{l=0}^{\klambda -1} \sum_{i,j=1}^p \E\biggl[ \biggl( \int e^{-\Real(\lambda) s} s^{l}  \, \xi^{i,j}(\ds) \biggl)^q \biggl] \biggl).
\end{align*}
for some constant $C > 0$, where in the final step we used the fact that the function $x \mapsto x^{q/2}$ is subadditive on $[0, \infty)$.  
The finiteness of the last expression follows from assumption (A\ref{ass:L^q norm}).

We now turn to the $L^q$-boundedness, i.e., $\sup_{t \geq 0} \E[\|W_t(\lambda)\|^q] < \infty$,
which, in view of \eqref{eq:W_tlambda Y_u representation}, is equivalent to
\begin{equation}	\label{eq:W_t(lambda) L^q-bounded}
\sup_{t \geq 0} \E\bigg[\bigg\|\sum_{u\in\I} \1_{\{S(u)\leq t\}} (\exp(\lambda,-S(u))\otimes \e_{\tau(u)}) Y_u\bigg\|^q \bigg] < \infty.
\end{equation}
Let $x\in\R^{1\times k}$ and $y\in \R^{p\times 1}$ be fixed vectors. Then, $(x\otimes 1)$ and $(1\otimes y)$ are vectors in the space $\R^{1\times k}\otimes\R$ and $\R\otimes \R^{p\times 1}$, respectively.
We assume that $|x|, |y| \leq 1$, where $|\cdot|$ denotes the Euclidean norm, here and throughout.
For \eqref{eq:W_t(lambda) L^q-bounded} to hold, it is sufficient to show that
\begin{align*}
(x \otimes 1) \sum_{u \in \I} \1_{\{S(u)\leq t\}} (\exp(\lambda,-S(u)) \otimes \e_{\tau(u)}) Y_u (1\otimes y)	\\
= \sum_{u \in \I} \1_{\{S(u)\leq t\}} (x \exp(\lambda,-S(u)) \otimes \e_{\tau(u)}) Y_u (1\otimes y)
\end{align*}
is an $L^q$-bounded martingale. Define 
\begin{align*}
M_n(\lambda)\defeq \sum_{|u|\leq n} \1_{\{S(u)\leq t\}} (x \exp(\lambda,-S(u))\otimes \e_{\tau(u)}) Y_u (1\otimes y)
\end{align*}
and notice that its $n$th increment, namely,
\begin{align*}
M_n(\lambda)-M_{n-1}(\lambda)=\sum_{|u|=n}   \1_{\{S(u)\leq t\}} (x \exp(\lambda,-S(u))\otimes \e_{\tau(u)}) Y_u (1\otimes y)
\end{align*}
is, conditionally on $\sigma(\tau(\varnothing), \bxi_u : |u| < n)$, a weighted sum of independent, centered random variables, for $n \in \N_0$.  
Therefore, we may apply the von Bahr--Esseen inequality (see \cite[Theorem 2]{Bahr+Esseen:1965} for the original version
and \cite[Lemma A.1]{Iksanov+Kolesko+Meiners:2020} for a version for complex-valued random variables) twice: once directly,
and once conditionally on $\sigma(\tau(\varnothing), \bxi_u : |u| < n)$, to obtain
\begin{align*}
\E\biggl[&\biggl|\sum_{u \in \I} \1_{\{S(u)\leq t\}} (x \exp(\lambda,-S(u))\otimes \e_{\tau(u)}) Y_u (1\otimes y)\biggl|^q\biggl] \\
&\leq 4 \sum_{n=0}^\infty \E\biggl[\biggl| \sum_{|u|=n} \1_{\{S(u)\leq t\}}(x \exp(\lambda,-S(u))\otimes \e_{\tau(u)}) Y_u (1\otimes y)\biggl|^q\biggl]	\\
&\leq 4 \sum_{n=0}^\infty \E\bigg[\E\biggl[\biggl| \sum_{|u|=n}   \1_{\{S(u)\leq t\}}(x \exp(\lambda,-S(u))\otimes \e_{\tau(u)}) Y_u (1\otimes y)\biggl|^q \, \bigg|\,\tau(\varnothing), \bxi_u : |u| < n \biggl]\bigg] \\
&\leq 16 \, \E\biggl[  \sum_{u \in \I}  \1_{\{S(u)\leq t\}} |(x \exp(\lambda,-S(u))\otimes \e_{\tau(u)}) Y_u (1\otimes y) |^q  \biggl].
\end{align*}
Then, using $|x|,|y|\leq 1$ and the independence of $\1_{\{S(u)\leq t\}} ( \exp(\lambda,-S(u) )\otimes \e_{\tau(u)})$ and $Y_u$,
\begin{align*}
\sum_{u\in\I} \E[ |  \1_{\{S(u)\leq t\}}(x \exp(\lambda,-S(u))\otimes \e_{\tau(u)}) Y_u (1\otimes y) |^q]
\leq \sum_{n=0}^\infty \sum_{|u|=n} \E\biggl[\lVert \exp(\lambda,-S(u)) \rVert^q\biggl] \E[ \lVert Y_u  \rVert^q].
\end{align*}
Using $\E[\|Y\|^q] \leq \E[\|Y\|_\HS^q]<\infty$ and the condition $\theta > \alpha/q$,
we find $\delta > 0$ so small that $q(\theta - \delta)>\alpha$ and some finite constant $C_\delta>0$ such that
\begin{align*}
\sum_{n=0}^\infty \sum_{|u|=n}   \E\biggl[\lVert \exp(\lambda,-S(u)) \rVert^q\biggl] \E[ \lVert Y_u  \rVert^q]
&\leq C_\delta \E[\|Y\|^q] \sum_{n=0}^\infty \E\biggl[ \sum_{|u|=n} e^{-q (\theta-\delta) S(u)}\biggl] \\
&\leq C_\delta \E[\|Y\|^q] \E[\e_{\tau(\varnothing)}] \sum_{n=0}^\infty  \cL\bmu(q(\theta-\delta))^n \mathbf{1}^\transp \\
&\leq C_\delta \E[\|Y\|^q] \E[\e_{\tau(\varnothing)}] (\Ip-\cL\bmu(q(\theta-\delta))^{-1} \mathbf{1}^\transp < \infty
\end{align*}
where $\mathbf{1}\in\R^{1\times p}$ denotes the row vector with all entries equal to one.
Further, notice that we have used that $q(\theta - \delta)>\alpha$,
hence the spectral radius of $\cL\bmu(q(\theta-\delta))$ is strictly smaller than $1$ and the infinite series of matrices converges.
The proof is complete.
\end{proof}

\section*{Acknowledgement}
Matthias Meiners and Ivana Tomic were supported by DFG grant ME3625/4-1.
Part of this work was conducted during visits of Matthias Meiners and Ivana Tomic to the University of Wroc\l aw,
for which they express their gratitude for the warm hospitality.

\begin{appendix}
\section{The Kronecker product}	\label{sec:Kronecker}
Consider a $m\times n$ matrix $A = (a_{ij})_{i=1,\ldots,m,j=1,\ldots,n}$ and a $k\times l$ matrix $B$.
Following \cite[Chapter 2.2]{Graham:1981}, the Kronecker product of the matrices $A$ and $B$, denoted by $A \otimes B$,
is defined as the $mk\times nl$ matrix 
\begin{align*}
A\otimes B
\defeq
\begin{pmatrix}
a_{11} B & \cdots & a_{1n} B  \\
\vdots & \ddots  & \vdots \\
a_{m1} B  & \cdots &  a_{mn} B
\end{pmatrix}.
\end{align*}
In other words, the Kronecker product increases the dimensions of both matrices
by replacing each entry of $A$ with a scaled copy of the entire matrix $B$, resulting in a block matrix.

\section{Exponential Matrices}	\label{sec:Matrix notation}

For $\lambda \in \interior{\Dom(\cL\bmu)}$ satisfying \eqref{eq:det(Lbmu-Ip)=0},
we use a multiplicative family of matrices $\exp(\lambda,x)$, $x\in \R$
similar to that used in \cite[Equation (4.10)]{Iksanov+al:2024},
namely,
we define the following upper triangular $\klambda \times \klambda$ matrix
\begin{align}
\exp(\lambda, x)
&= (\exp_{ij}(\lambda, x))_{i,j=1,\ldots,\klambda}
\defeq e^{\lambda x}   \begin{pmatrix}
1 & x & \frac{x^2}{2!}  & \cdots & \frac{x^{\klambda-1} }{(\klambda-1)!}   \\
0 & 1  & x & \cdots & \frac{x^{\klambda-2} }{(\klambda-2)!}  \\
0 & 0  & 1  & \ddots & \vdots \\
\vdots & \ddots    & \ddots & \ddots & x  \\
0  & \cdots &   & 0  &1 
\end{pmatrix}.	\label{eq:exp matrix}
\end{align}
We have
\begin{equation}	\label{eq:exp(lambda,x)exp(lambda,y)=exp(lambda,x+y)}
\exp(\lambda, x)  \exp(\lambda,y)=\exp(\lambda,x+y),	\quad	x,y \in \R.
\end{equation}
Writing $J_\lambda$ for the
$\klambda \times \klambda$ Jordan matrix
\begin{equation*}
J_{\lambda} =
\begin{pmatrix}
\lambda	& 1	& 0			& \ldots	& 0	\\
0	& \lambda		& 1		& \ldots	& 0	\\
\vdots	& 	& \ddots		& \ddots	& \vdots	\\
0	& \ldots	& 	& \lambda	& 1	\\
0	& \ldots	& 	& 	& \lambda	\\
\end{pmatrix}
\end{equation*}
we infer \eqref{eq:exp(lambda,x)exp(lambda,y)=exp(lambda,x+y)} from $\exp(\lambda,x) = \exp(x J_\lambda)$
from the functional equation of the matrix exponential function.
\end{appendix}


%
\bibliographystyle{abbrv}



\end{document}